\numberwithin{equation}{section}
\theoremstyle{plain}
\newtheorem{theorem}{Theorem}[section]
\newtheorem{lemma}[theorem]{Lemma}
\newtheorem{proposition}[theorem]{Proposition}
\newtheorem{corollary}[theorem]{Corollary}
\theoremstyle{definition}
\newtheorem{remark}[theorem]{Remark}
\newcommand{\R}{\mathbb{R}}
\newcommand{\cC}{\mathcal{C}}
\newcommand{\cH}{\mathcal{H}}
\newcommand{\bdT}{\mathbf{T}}
\newcommand{\bde}{\mathbf{e}}
\newcommand{\bdt}{\mathbf{t}}
\newcommand{\bdx}{\mathbf{x}}
\newcommand{\bdtheta}{\boldsymbol{\theta}}
\newcommand{\bdTheta}{\boldsymbol{\Theta}}
\newcommand{\ud}{\mathrm{d}}
\newcommand{\dist}{\mathrm{dist}}
\begin{document}

\title[Spherical maximal estimates via geometry]{Spherical maximal estimates via geometry}

\date{}

\begin{abstract}
We present a simple geometric approach to studying the $L^p$ boundedness properties of Stein's spherical maximal operator, which does not rely on the Fourier transform. Using this, we recover a weak form of Stein's spherical maximal theorem. 
\end{abstract}

\author[J. Hickman]{ Jonathan Hickman }
\address{Jonathan Hickman: School of Mathematics and Maxwell Institute for Mathematical Sciences, James Clerk Maxwell Building, The King's Buildings, Peter Guthrie Tait Road, Edinburgh, EH9 3FD, UK.}
\email{jonathan.hickman@ed.ac.uk}

\author[A. Jan\v car]{ Aj\v sa Jan\v car }
\address{Aj\v sa Jan\v car: School of Mathematics, James Clerk Maxwell Building, The King's Buildings, Peter Guthrie Tait Road, Edinburgh, EH9 3FD, UK.}
\email{a.jancar@sms.ed.ac.uk}

\maketitle




\section{Introduction}




\subsection{The spherical maximal theorem} Let $n \geq 2$ and, for $x \in \R^n$ and $r > 0$, let $C(x,r)$ denote the (Euclidean) sphere in $\R^n$ centred at $x$ of radius $r$. Furthermore, let $\sigma_{x,r}$ denote the spherical surface measure on $C(x,r)$, normalised to have mass one. Define the \textit{spherical maximal function}
\begin{equation}\label{eq: max op}
  Mf(x) := \sup_{r > 0}  \int_{C(x,r)} |f| \,\ud \sigma_{x,r}, \qquad x \in \R^n,
\end{equation}
acting (at least initially) on continuous, compactly supported functions $f \in C_c(\R^n)$. Thus, $M$ takes maximal averages of $f$ over the concentric spheres $C(x,r)$ centred at some fixed point $x \in \R^n$. 

An important result of Stein \cite{Stein1976} determines the $L^p$-mapping properties of $M$.

\begin{theorem}[Spherical maximal theorem \cite{Stein1976}]\label{thm: Stein} For all $n \geq 3$ and $p > p_n := \tfrac{n}{n-1}$, there exists a constant $C_{n,p} > 0$ such that the inequality
\begin{equation*}
    \|Mf\|_{L^p(\R^n)} \leq C_{n,p} \|f\|_{L^p(\R^n)}
\end{equation*}
holds for all $f \in C_c(\R^n)$.
\end{theorem}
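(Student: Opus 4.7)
The plan is to bound $M$ purely geometrically, avoiding the Fourier transform entirely. The strategy rests on three ingredients: linearisation, a $TT^{*}$-type computation, and an elementary formula for the surface measure of a sphere-pair intersection in $\R^n$ with $n \geq 3$.

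After a standard scaling reduction one may restrict to radii in $[1, 2]$. I would then linearise: for a measurable radius field $r \colon \R^n \to [1, 2]$, introduce
\[
T_r f(x) \ := \ \int_{C(x, r(x))} f \, \ud \sigma_{x, r(x)},
\]
so that $Mf(x) = \sup_{r} T_r |f|(x)$ (supremum over all such $r$); it therefore suffices to establish $L^p \to L^p$ bounds on $T_r$ that are uniform in the choice of $r$.

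The central step is a $TT^{*}$ computation. The Schwartz kernel of $T_r T_r^{*}$ at $(x, y) \in \R^n \times \R^n$ is, up to normalisation, the $(n-2)$-dimensional surface measure of $C(x, r(x)) \cap C(y, r(y))$. In dimension $n \geq 3$ this intersection is generically a codimension-$2$ sphere whose centre and radius are explicit functions of $x$, $y$, $r(x)$, $r(y)$; this transversality is the essential geometric input and is the reason the argument requires $n \geq 3$. Feeding the resulting pointwise kernel bound into a Schur-type test gives $L^2$ boundedness of $T_r$, and, exploiting the explicit $|x-y|$-decay of the kernel together with real interpolation against the trivial $L^\infty$ estimate, produces strong-type $L^p$ bounds in a range of exponents approaching $(p_n, \infty)$.

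The principal obstacle is the near-tangent regime, where $|x - y|$ is close to $r(x) + r(y)$ or to $|r(x) - r(y)|$ and the codimension-$2$ intersection nearly degenerates, causing the pointwise kernel bound to blow up. I would handle this by a further dyadic decomposition in the transversality parameter, summing the resulting geometric series at the cost of a logarithmic or small polynomial loss. This loss is presumably what prevents the argument from reaching the sharp endpoint $p = p_n$, and what is responsible for the \emph{weak} qualifier in the authors' statement.
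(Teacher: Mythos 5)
First, observe that the paper does not prove Theorem~\ref{thm: Stein} at all: it is Stein's result, cited from \cite{Stein1976}, and the paper's own contribution is the weaker, $\delta$-discretised Theorem~\ref{thm: delta disc}, established by purely geometric means. Your proposal is therefore most usefully read as an attempt at a geometric alternative argument, which is the spirit of the paper.

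There is a genuine gap in your plan. A $TT^{*}$ computation together with a Schur-type test yields an $L^{2}\to L^{2}$ bound, and interpolating $L^{2}$ against the trivial $L^{\infty}$ bound produces only the range $p\geq 2$. But $p_{n}=\tfrac{n}{n-1}<2$ for every $n\geq 3$, so this scheme misses the entire range $p_{n}<p<2$, which is precisely the nontrivial content of the theorem. To descend below $p=2$ one needs a second endpoint near $L^{1}$ rather than at $L^{\infty}$ (in Stein's Fourier-analytic proof this is supplied by an $L^{1}$ estimate on frequency-localised pieces, in the paper by working directly at the critical exponent via C\'ordoba duality); a Schur test on its own cannot furnish it. So, as written, the proposal cannot reach the claimed range of exponents.

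Beyond the gap, the route is also structurally different from the paper's, in ways that matter. Rather than linearising and studying the $TT^{*}$ kernel, the paper passes to a $\delta$-discretised local operator, restricts to polar caps $C^{\delta,\star}$, slices along hyperplanes $\R^{n-1}\times\{a\}$, and reduces by duality to the $L^{n}$ multiplicity bound of Proposition~\ref{prop: multiplicity}; since $n=(p_{n})'$, this lands exactly at the critical exponent. Crucially, the decisive degeneracy the paper must circumvent is not the pairwise near-tangency you flag (where $|x-y|$ is near $r(x)\pm r(y)$) but a genuinely three-sphere phenomenon: even when every pairwise intersection is transversal, the circles $C_{1}\cap C_{2}$ and $C_{1}\cap C_{3}$ can be internally tangent as curves on $C_{1}$, giving $|C_{1}^{\delta}\cap C_{2}^{\delta}\cap C_{3}^{\delta}|\sim\delta^{5/2}$ (in $\R^{3}$) instead of the generic $\delta^{3}$. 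A pairwise $TT^{*}$ kernel estimate does not detect this configuration at all, and the paper's remedy --- the combination of polar caps and variable slicing, which forces any such tangency point onto the slice $\R^{n-1}\times\{0\}$ and hence far from the caps --- has no analogue in your proposal.
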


Note the restriction $n \geq 3$. The result is still valid in the case $n=2$, but is significantly more difficult to prove and was resolved much later in a celebrated work of Bourgain~\cite{Bourgain1986}. In all dimensions, simple examples show that the range of estimates is sharp, in the sense that the $L^p$ bounds fail whenever $p \leq p_n$.




\subsection{A geometric approach to the spherical maximal theorem} Stein's  proof of Theorem~\ref{thm: Stein} \cite{Stein1976} (and, as far as we are aware, all subsequent proofs) is heavily Fourier analytical, exploiting decay properties of the Fourier transform $\widehat{\sigma}_{x,r}$ and Plancherel's theorem to prove favourable $L^2$ estimates.  In this note, we investigate a purely geometric approach to studying the spherical maximal operator.\medskip

Here we work with $\delta$-discretised maximal operators. Given $0 < \delta < 1/2$, $x \in \R^n$ and $1 \leq r \leq 2$, let $C^{\delta}(x,r)$ denote the annulus with outer radius $r$ and thickness $\delta$, formed by the $\delta$-neighbourhood of $C(x,r)$. We then define the family of auxiliary maximal operators
\begin{equation*}
  M^{\delta}f(x) := \sup_{1 \leq r \leq 2}  \fint_{C^{\delta}(x,r)} |f|, \qquad x \in \R^n,
\end{equation*}
acting on $f \in L^1_{\mathrm{loc}}(\R^n)$. Here, given a set $E \subseteq \R^n$ of finite, positive $n$-dimensional Lebesgue measure and a non-negative measurable $f$, we let $\fint_E f := \tfrac{1}{|E|} \int_E f$ denote the average of $f$ over $E$. 

Our goal is to prove the following weak variant of Theorem~\ref{thm: Stein}, which roughly tells us that $M$ is `almost' bounded on $L^p$ for $p \geq \frac{n}{n-1}$.

\begin{theorem}\label{thm: delta disc} For all $n \geq 2$, $p \geq p_n$, there exists a constant $C_{n, p} \geq 1$ such that inequality
\begin{equation*}
    \|M^{\delta}f\|_{L^p(\R^n)} \leq C_{n, p}(\log\delta^{-1})^{p_n/p}\|f\|_{L^p(\R^n)}
\end{equation*}
holds for all $f \in L^1_{\mathrm{loc}}(\R^n)$ and all $0 < \delta < 1/2$. 
\end{theorem}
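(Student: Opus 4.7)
My plan is to reduce to the endpoint $p = p_n$ by linearization and interpolation, and then to prove the endpoint estimate via an incidence-geometric argument, with the logarithmic loss arising from a dyadic decomposition according to sphere transversality scales. Concretely, for each measurable $r : \R^n \to [1,2]$ I would define $T_r f(x) := \fint_{C^\delta(x, r(x))} |f|$, so that $M^\delta f = \sup_r T_r f$ pointwise and it suffices to bound each $T_r$ uniformly in $r$. Since $T_r$ is linear and $\|T_r f\|_{L^\infty} \leq \|f\|_{L^\infty}$ trivially, Riesz--Thorin interpolation with an endpoint bound $\|T_r f\|_{L^{p_n}} \leq C(\log \delta^{-1}) \|f\|_{L^{p_n}}$ yields $\|T_r f\|_{L^p} \leq C_p (\log \delta^{-1})^{p_n/p} \|f\|_{L^p}$ for every $p \in [p_n, \infty]$, and passing to a measurable selector $r_\ast$ realising the supremum recovers the inequality for $M^\delta$.

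The endpoint estimate is the heart of the matter. I would first establish its restricted weak-type form for $f = \chi_E$, namely $|F_\alpha| := |\{T_r \chi_E > \alpha\}| \leq C (\log \delta^{-1})\alpha^{-p_n}|E|$; the strong $L^{p_n}$ bound follows by a standard decomposition of $f$ along its level sets. For each $x \in F_\alpha$ the witnessing radius $r(x)$ satisfies $|C^\delta(x, r(x)) \cap E| \gtrsim \alpha\delta$. Extracting a maximal $\delta$-separated subset $X \subseteq F_\alpha$ (so $|X| \asymp \delta^{-n} |F_\alpha|$), summing these lower bounds over $X$, and then applying H\"older's inequality with conjugate exponent $p_n' = n$ gives
\begin{equation*}
\alpha \delta |X| \;\lesssim\; \int_E \sum_{x \in X} \chi_{C^\delta(x, r(x))} \;\leq\; |E|^{(n-1)/n} \, \Big\| \sum_{x \in X} \chi_{C^\delta(x, r(x))} \Big\|_{L^n(\R^n)}.
\end{equation*}
Matters thereby reduce to a sharp $L^n$ bound for the counting function of the selected annuli.

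This is the incidence-geometric core. Expanding $\|\sum_x \chi_{C^\delta(x, r(x))}\|_{L^n}^n$ as a sum of $n$-fold annular intersection volumes, the key transversality input is that $n$ hyperspheres in $\R^n$ with linearly independent unit normals at a common point meet in an isolated zero-dimensional locus, so their $\delta$-thickenings intersect in measure $\lesssim \delta^n / |J|$, where $J$ is the Gram determinant of the normals. Generic $n$-tuples contribute $\lesssim \delta^n$ each; the contribution of near-tangent tuples, where $|J|$ is small, I would control by dyadically decomposing the sum according to the scale $|J| \sim 2^{-k}$ and bounding the number of tuples at each scale via a geometric counting argument. Summing over the $O(\log \delta^{-1})$ relevant dyadic scales is the precise origin of the logarithmic loss. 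Feeding the resulting $L^n$ bound back into the H\"older inequality above and rearranging yields the restricted weak-type estimate.

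The main obstacle is the tangency analysis: quantitatively controlling the size and number of near-tangent configurations of $\delta$-separated spheres so that the dyadic sum closes with only a logarithmic excess. In dimension $n = 2$ this is the content of Marstrand's three-circles theorem and the C\'ordoba $L^2$ estimate for $\delta$-thickened circles; in higher dimensions an analogous transversality analysis for $n$-tuples of spheres in $\R^n$ is required. All other steps---linearization, interpolation, extraction of the $\delta$-separated set, and the H\"older duality---are essentially routine, and the entire strength of Theorem~\ref{thm: delta disc} is concentrated in this single geometric ingredient.
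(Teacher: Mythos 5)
Your proposal correctly identifies the overall framework (reduction to the endpoint $p_n$ by interpolation, discretisation and duality to an $L^n$ multiplicity bound, and a transversality analysis of $n$-fold intersections of annuli), but it omits the single idea that makes the paper's argument go through, and as a result the proposed incidence-geometric step cannot close in the form you describe.

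The crux is your last paragraph, where you treat near-tangent $n$-tuples by a dyadic decomposition in the Gram-type quantity $|J|$ ``via a geometric counting argument.'' The paper identifies exactly this as the enemy (see \S\ref{subsec: enemy}, item 2): for $n=3$ there are triples $C_1, C_2, C_3$ whose \emph{centres} are well separated and far from collinear, yet the circles $C_1 \cap C_2$ and $C_1 \cap C_3$ are tangent on $C_1$, giving $|C_1^{\delta}\cap C_2^{\delta}\cap C_3^{\delta}| \sim \delta^{5/2}$ rather than $\delta^3$. Your $|J|$ (measuring linear independence of the sphere normals at a common point) does detect this degeneracy, so the degeneracy detector itself is fine. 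The problem is the count: as the paper notes, once $C_1$ is fixed one can arrange for this tangency configuration to occur for essentially \emph{all} choices of $C_2$ and $C_3$, so the naive ``codimension-one in $(x,r)$, therefore few such tuples'' heuristic does not give a bound that closes for all cardinalities of the selected family; one loses control of the $\#X$-dependence, and the resulting estimate degrades from the required $\#X$ to $(\#X)^n$. The paper also records (remark after Lemma~\ref{lem: generaltuplebound}) that the analogue of its key volume estimate with full annuli $C^{\delta}_j$ is simply false, precisely because of this scenario. Working with $\delta^{5/2}$-type bounds directly is possible for certain Nikodym operators but, as remarked in \S\ref{subsec: enemy}, it does not yield sharp bounds for the spherical maximal function.

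The paper's way around this is entirely absent from your proposal: it first replaces the full annulus with a polar cap $C^{\delta,\star}$ (a region near the poles with $|y_n - x_n| < r/100n$), which is permissible by rotational symmetry and pigeonholing, and then \emph{slices} the $L^p$ norm over horizontal hyperplanes $\R^{n-1}\times\{a\}$. With the spheres' centres all lying on the slicing hyperplane, any tangency point between $C_1\cap C_2$ and $C_1\cap C_3$ is forced onto that same hyperplane, while the polar caps live far from it --- so the enemy scenario simply cannot contribute. After this reduction, one no longer needs the Gram determinant of the normals at all: the relevant quantity is the angular separation of the \emph{centres}' directions $e(C_1,C_j)$, and Lemma~\ref{lem: generaltuplebound} gives the clean bound $|\bigcap_j C_j^{\delta,\star}| \lesssim \delta^m / (\prod t_j \prod \theta_j)$ which dualises against the cardinality count in Corollary~\ref{cor: cardinality}. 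The dyadic summation and the single logarithmic loss then come from summing over the distance/angle scales $(t_j,\theta_j)$, not from a $|J|$-decomposition.

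Two smaller remarks. First, your ``$\lesssim \delta^n/|J|$'' bound cannot be uniformly correct as stated: it blows up as $|J|\to 0$, whereas the actual intersection volume saturates (for $n=3$, at $\delta^{5/2}$). Second, passing from a restricted weak-type bound at the single exponent $p_n$ to the strong-type bound at $p_n$ is not automatic ``by a standard decomposition along level sets''; the paper instead proves the strong-type statement directly through the multiplicity bound, using an effective version of the pigeonholing/H\"older argument of Mattila's Proposition 22.6 which itself accounts for a $(\log\delta^{-1})^{(n-1)/n}$ loss. These are fixable; the missing polar-cap-plus-slicing reduction is not.
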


This result is sharp in the sense that for $p < p_n$ the $L^p$ operator norm of $M^{\delta}$ cannot be bounded by some fixed power of $\log \delta^{-1}$ but must grow polynomially in $\delta^{-1}$. However, Theorem~\ref{thm: delta disc} is weaker than Theorem~\ref{thm: Stein} in two respects. Because of the $(\log\delta^{-1})$-losses, we do not recover the boundedness of the spherical maximal operator on any $L^p$ space. Furthermore, here we only consider the local operator, where the radii are constrained to satisfy $1 \leq r \leq 2$. Thus, here the interest is not in the result itself, but in the method of proof. As mentioned above, our argument is purely geometric, involving a careful analysis of the intersection properties of spherical annuli. Whilst it may be possible to push the methods further to recover the full strength of Theorem~\ref{thm: Stein}, arguably there is value in having a short, simple argument at the cost of a slightly weaker estimate. 




\subsection{Fourier analytic vs geometric arguments} Fourier analytic methods are very powerful in the theory of geometric maximal functions. Rubio de Francia~\cite{R1986} established a far-reaching extension of Theorem~\ref{thm: Stein} for a class of maximal Fourier multiplier operators, which allows one to study maximal averages over a large variety of surfaces and also fractal sets. Bourgain's celebrated proof of the circular maximal operator relies heavily on Fourier analysis~\cite{Bourgain1986}, as does an alternative proof due to Mockenhaupt--Seeger--Sogge~\cite{MSS1992}, which made an important connection between the circular maximal operator and the \textit{local smoothing} phenomenon for the wave equation. Fourier analysis has also played a substantial role in the modern theory; for instance, square functions and decoupling inequalities from Fourier restriction theory are central to the recent proofs of the $L^p$ boundedness of the helical maximal operator \cite{BGHS, KLO2022}.

Despite the power of Fourier analytical tools, it is also of interest to understand the maximal operator such as \eqref{eq: max op} from a purely geometric and combinatorial standpoint, without making use of the Fourier transform. An early instance of this is Marstrand's circle packing theorem~\cite{Marstrand1987}, which is a precursor to the circular maximal theorem. Marstrand's argument combines geometric information about circle tangencies with a variant of the K\"ovari--S\'os--Tur\'an argument \cite{KST1954} from combinatorics. These ideas were further developed in the 1990s in the context of maximal functions by Kolasa--Wolff~\cite{KW1999}, Wolff~\cite{Wolff1999, Wolff2000} and Schlag~\cite{Schlag1997, Schlag1998}. In particular, in a remarkable and intricate paper, Schlag~\cite{Schlag1998} gave an entirely geometric proof of Bourgain's circular maximal function theorem. 

Geometric arguments offer a different perspective on maximal operators and in some cases can be more effective than Fourier analytic methods. For instance, the aforementioned works \cite{KW1999, Wolff1999} treat multiparameter circular maximal operators (where one takes the supremum over the possible centres of the circles, rather than the radii) where Fourier analytic techniques appear to be less effective than in the one parameter case. Furthermore, many recent advances in the theory of geometric maximal operators rely on a synthesis of both geometric and Fourier analytic tools: see, for instance, the relationship between the works \cite{Zahl, CGY, LLO2025}.\medskip




\subsection{Notational conventions}\label{subsec: notation}  Given a list of objects $L$ and non-negative real numbers $A$, $B$, we write $A \lesssim_L B$ or $B \gtrsim_L A$ if $A \leq C_L B$ where $C_L > 0$ is a constant depending only on the objects listed in $L$ and a choice of dimension $n$. Given a vector space $V$, we let $\langle x_1, \dots, x_m \rangle$ denote the linear span of $x_1, \dots, x_m \in V$. If $V$ is an inner product space and $E \subseteq V$ is a vector subspace, then we let $E^{\perp}$ denote the orthogonal complement and $\mathrm{proj}_E \colon V \to E$ the orthogonal projection map. \medskip




\noindent \textbf{Acknowledgements.} The first author is supported by New Investigator Award UKRI097. He wishes to thank David Beltran and Joshua Zahl for helpful comments and encouragement regarding an early draft of this work. This research was initiated in summer 2023 when the second author was supported by a University of Edinburgh School of Mathematics Vacation Scholarship. It also forms part of her undergraduate dissertation at the University of Edinburgh.




\section{Summary of the proof}




\subsection{An enemy scenario}\label{subsec: enemy} Here we give an overview of the key ideas in the proof of Theorem~\ref{thm: delta disc}. We focus on $n=3$ case, which is already representative of the general situation. Here the critical exponent is $p_n = 3/2$. By a standard duality argument \`a la C\'ordoba~\cite{Cordoba1977}, matters reduce to bounding expressions of the form
\begin{equation}\label{eq: multiplicity}
    \big\|\sum_{C \in \cC} \chi_{C^{\delta}}\|_{L^3(\R^3)}^3 = \sum_{C_1, C_2, C_3 \in \cC} | C_1^{\delta} \cap C_2^{\delta} \cap C_3^{\delta}|,
\end{equation}
where $\cC$ is a family of spheres in $\R^3$ with $\delta$-separated centres and $C^{\delta}$ denotes the $\delta$-neighbourhood of $C \in \cC$.\medskip

Generically, we should expect three spheres $C_1$, $C_2$, $C_3 \in \cC$ to intersect transversally, leading to a volume bound
\begin{equation}\label{eq: good intersection}
   | C_1^{\delta} \cap C_2^{\delta} \cap C_3^{\delta}| \lesssim \delta^3.
\end{equation}
If the generic case held for every single triple, then we would immediately have a favourable estimate for \eqref{eq: multiplicity}. However, there are many non-generic situations which lead to larger sphere intersections.\medskip

\noindent 1) If the three spheres $C_1$, $C_2$, $C_3$ have collinear centres, then they can intersect along a common circle. This leads to the weaker volume bound 
\begin{equation}\label{eq: v bad intersection}
    | C_1^{\delta} \cap C_2^{\delta} \cap C_3^{\delta}| \lesssim \delta^2,
\end{equation}
assuming that the centres of the spheres are well-separated. To isolate and deal with this situation, we simply divide the summation over $C_1, C_2, C_3 \in \cC$ according to the angle formed between $x(C_2) - x(C_1)$ and $x(C_3) - x(C_1)$, where $x(C_j)$ denotes the centre of $C_j$. For small angles, we may have a bad intersection bound as in \eqref{eq: v bad intersection}, but this is compensated by the fact there are relatively few triples of spheres which form a small angle. This part of the proof is essentially a variant C\'ordoba's argument for bounding the Kakeya maximal operator~\cite{Cordoba1977}. \medskip

\noindent 2) Collinearity of centres is not the only enemy. Suppose we have a triple of spheres $C_1$, $C_2$, $C_3$ whose centres are well-separated and very far from colinear in the sense that the angle formed between $x(C_2) - x(C_1)$ and $x(C_3) - x(C_1)$ is large. In this situation, we would ideally like to have a good intersection bound as in \eqref{eq: good intersection}. However, there is a further enemy scenario, which is illustrated in Figure~\ref{fig: enemy}. The sets $C_1 \cap C_2$ and $C_1 \cap C_3$ are circles on the sphere $C_1$. If the radii of $C_2$ and $C_3$ are suitably chosen, then $C_1 \cap C_2$ and $C_1 \cap C_3$ can be tangent. This leads to an unfavourable volume bound of 
\begin{equation}\label{eq: bad intersection}
    |C_1^{\delta} \cap C_2^{\delta} \cap C_3^{\delta}| \sim \delta^{5/2}.
\end{equation}
It is not possible to compensate for the poor bound in \eqref{eq: bad intersection} by a simple counting argument as in 1). Indeed, once we fix $C_1$, it is possible to set things up so the enemy scenario happens for essentially all choices of $C_2$ and $C_3$.\medskip

We remark that the enemy scenario described in 2) also plays a role in the analysis of certain Nikodym-type maximal operators associated to spheres: see~\cite{CDK}. However, for the operators studied in \cite{CDK}, one can work directly with the $\delta^{5/2}$ numerology and still obtain sharp bounds, which is not the case for the classical spherical maximal function. 

\begin{figure}
    \centering
    \includegraphics[width=0.7\linewidth]{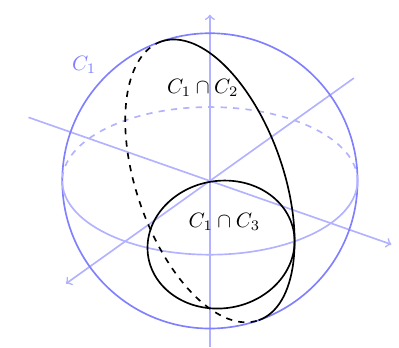}
    \caption{The enemy scenario for $n = 3$. Here we have a fixed sphere $C_1$ which is intersected by two further spheres $C_2$ and $C_3$ (not shown in the figure). The intersections $C_1 \cap C_2$ and $C_1 \cap C_3$ form a pair of \textit{tangent} circles on $C_1$, leading to an unfavourable volume bound of $|C_1^{\delta} \cap C_2^{\delta} \cap C_3^{\delta}| \sim \delta^{5/2}$. Note if the centres of $C_1$, $C_2$, $C_3$ all lie on the horizontal coordinate plane, then the point of tangency necessarily lies on the equator of $C_1$.}
    \label{fig: enemy}
\end{figure}




\subsection{Circumventing the enemy scenario}\label{subsec: avoid enemy}

The situation discussed in 2) shows that circle tangencies are an important consideration in the analysis of the spherical maximal operator in $\R^3$ around the critical $3/2$ exponent. This should be compared to the role of circle tangencies in the study of Bourgain's circular maximal function around the critical $2$ exponent. \medskip

Rather than treating circle tangencies directly, our strategy is to use a \textit{variable slicing} argument to circumvent the enemy scenario. Variable slicing appears to be a powerful tool in the study of geometric maximal functions and has previously been applied in related contexts (see, for instance, \cite[\S6]{Zahl}).\medskip

Suppose we have a triple of spheres $C_1$, $C_2$, $C_3$ such that $C_1 \cap C_2$ and $C_1 \cap C_3$ form a pair of tangent circles on $C_1$, as in the enemy scenario described in 2). Further suppose that the centres of $C_1$, $C_2$ and $C_3$ lie on the horizontal plane $\R^2 \times \{0\}$. The key observation is that the point of tangency between $C_1 \cap C_2$ and $C_1 \cap C_3$ must also lie on the plane $\R^2 \times \{0\}$: see Figure~\ref{fig: enemy}.\medskip

To exploit the above observation, we first use the rotation invariance of the problem to replace the entire sphere $C(x,r)$ in the definition of the maximal operator with a small polar cap centred around the vertical axis. We then turn to the ostensibly harder problem of proving uniform bounds for the sliced norms
\begin{equation*}
   \sup_{a \in \R} \|M^{\delta}f\|_{L^p(\R^2 \times \{a\})}
\end{equation*}
of the maximal function (now defined with respect to polar caps). Since our operator is local, such a sliced bound easily leads to a true $L^p(\R^3)$ estimate. By translation invariance, we may further assume $a = 0$.\medskip

When we apply the duality argument to this sliced formulation, we are led to consider a variant of \eqref{eq: multiplicity} where now all the spheres in $\cC$ are centred on $\R^2 \times \{0\}$ and each $C^{\delta}$ is replaced with $C^{\delta, \star}$, which represents the $\delta$-neighbourhood of a polar cap on $C$ (rather than a whole sphere). This means that, given a triple $C_1$, $C_2$, $C_3$ of spheres, a point of tangency between $C_1 \cap C_2$ and $C_1 \cap C_3$ (if it exists) must also lie on the plane $\R^2 \times \{0\}$. However, we are only considering the polar regions $C^{\delta, \star}$, which lie very far from $\R^2 \times \{0\}$, so any such tangencies fail to contribute. 




\section{Proof of Theorem~\ref{thm: delta disc}}




\subsection{Initial reductions} We begin by implementing the reduction to polar caps and the slicing argument discussed in \S\ref{subsec: avoid enemy}. Given $x \in \R^n$ and $r > 0$, recall that
\begin{equation*}
    C^{\delta}(x,r)  = \big\{ y \in \R^n : ||y - x| - r|<\delta \big\}.
\end{equation*}
We let $C^{\delta, \star}(x,r)$ denote the polar region
\begin{equation*}
 C^{\delta, \star}(x,r)  = \big\{ y \in \R^n : ||y - x| - r|<\delta, \, |y_n - x_n| < r/100n\big\}.   
\end{equation*}
By pigeonholing and the rotational symmetry of the problem, it suffices to show Theorem~\ref{thm: delta disc} holds with $M^{\delta}$ replaced with
\begin{equation*}
  M^{\delta, \star}f(x) := \sup_{1 \leq r \leq 2}  \fint_{C^{\delta, \star}(x,r)} |f|, \qquad x \in \R^n.
\end{equation*}

By interpolation with the trivial $L^{\infty}$ estimate, the full range of estimates follows once we have established the endpoint case $p = p_n$. Furthermore, since the operator is local, it suffices to show 
\begin{equation*}
    \|M^{\delta, \star}f\|_{L^{p_n}(Q^n)} \lesssim (\log \delta^{-1})\|f\|_{L^{p_n}(\R^n)}
\end{equation*}
where $Q := [-2^{-1}n^{-1/2}, 2^{-1} n^{-1/2}]$ so that the domain $Q^n$ has diameter $1$. Finally, by Fubini's theorem and translation invariance, the problem is further reduced to proving the bound 
\begin{equation}\label{eq: red max 1}
    \|M^{\delta, \star}f\|_{L^{p_n}(Q^{n-1} \times \{0\})} \lesssim  (\log \delta^{-1}) \|f\|_{L^{p_n}(\R^n)}.
\end{equation}

Although elementary, the combination of the reduction to polar caps $C^{\delta, \star}$ and slicing is essential for the following argument. 



\subsection{Multiplicity formulation}

We apply discretisation and duality arguments to convert the problem into a multiplicity bound. In particular, \eqref{eq: red max 1} is equivalent to the following. 

\begin{proposition}\label{prop: multiplicity} For all $0 < \delta < 1$, we have
    \begin{equation*}
    \big\|\sum_{C \in \cC}  \chi_{C^{\delta, \star}}\|_{L^n(\R^n)} \lesssim (\log\delta^{-1})^{1/n} \delta^{1-(n-1)^2/n} [\#\cC]^{1/n}
\end{equation*}
whenever $\cC$ is a family of spheres with $\delta$-separated centres lying in $Q^{n-1} \times \{0\}$ and with radii lying in $[1,2]$.
\end{proposition}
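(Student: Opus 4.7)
My plan is to expand the $L^n$-norm into a sum over $n$-fold intersections, reduce each intersection to an affine slab picture in $\R^{n-1}$ via the polar-cap parametrization, and estimate the resulting sum by a C\'ordoba-style dyadic decomposition.

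By the pointwise identity
\begin{equation*}
\Big\|\sum_{C\in\cC}\chi_{C^{\delta,\star}}\Big\|_{L^n(\R^n)}^n=\sum_{(C_1,\ldots,C_n)\in\cC^n}\bigl|C_1^{\delta,\star}\cap\cdots\cap C_n^{\delta,\star}\bigr|,
\end{equation*}
it suffices to bound the sum on the right by $\lesssim(\log\delta^{-1})\delta^{n-(n-1)^2}\#\cC$. Fix a base sphere $C_1\in\cC$, and parametrize the polar region $C_1^{\delta,\star}$ by coordinates $(y',r)\in\R^{n-1}\times(r_1-\delta,r_1+\delta)$ (with the vertical coordinate a smooth function of $(y',r)$ on the relevant polar region). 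Since every centre lies in $\R^{n-1}\times\{0\}$, subtracting the sphere equations for $C_1$ and $C_j$ yields an affine identity $2y'\cdot v_{1j}=|v_{1j}|^2+r^2-r_j^2$ with $v_{1j}:=x_j'-x_1'$; hence the $\delta$-shell constraint $\bigl||y-x_j|-r_j\bigr|<\delta$ becomes, in the coordinates $(y',r^2)\in\R^n$, an affine $\delta$-slab with normal $(2v_{1j},-1)$. Intersecting $n$ such slabs (one from each $C_j$) gives the transversality bound
\begin{equation*}
\bigl|C_1^{\delta,\star}\cap\cdots\cap C_n^{\delta,\star}\bigr|\lesssim\frac{\delta^n}{|\det(v_{12},\ldots,v_{1n})|}.
\end{equation*}
The polar-cap restriction is crucial here: it confines attention to the region of $C_1$ far from its equator, precisely ruling out the tangency scenario of \S\ref{subsec: enemy} and ensuring that no further degeneracies arise beyond the obvious collinearity captured by $|\det|\to 0$.

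To sum this bound over $(n-1)$-tuples $(C_2,\ldots,C_n)$ for fixed $C_1$, I apply a multi-scale dyadic decomposition on the magnitudes $\rho_j:=|v_{1j}|\sim 2^{-a_j}$ and on the unit-normalized transversality $V:=|\det(\hat v_{12},\ldots,\hat v_{1n})|\sim 2^{-k}$, where $\hat v_{1j}:=v_{1j}/\rho_j$. Since the centres are $\delta$-separated in $Q^{n-1}$, the number of $C_j$ with $\rho_j\sim 2^{-a_j}$ is $\lesssim 2^{-(n-1)a_j}\delta^{-(n-1)}$, and the angular constraint $V\sim 2^{-k}$ further reduces the tuple count by a factor $\sim 2^{-k}$, in the spirit of C\'ordoba's argument for the Kakeya problem \cite{Cordoba1977}. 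A direct computation reveals that the dyadic counts precisely cancel the $2^{k+\sum_j a_j}$-blowup from $1/|\det|$; after summing in the $a_j$'s the result is independent of $k$, and summing over the $O(\log\delta^{-1})$ dyadic scales of $V$ produces the logarithmic factor. A final summation over the $\#\cC$ choices of $C_1$ then yields the claimed bound.

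The main technical obstacle is making the counting sharp at each dyadic scale. The classical C\'ordoba count handles the regime $\rho_j\sim 1$ directly, but when some centres cluster near $x_1'$ the interplay between the $\rho_j$-dependence in the volume bound and the local density of centres must be tracked carefully via the $a_j$-decomposition. A secondary technical point is a quantitative justification of the hyperplane reduction: although the tangency-avoidance heuristic of \S\ref{subsec: avoid enemy} is clean, formally obtaining the uniform bound $\delta^n/|\det|$ requires verifying that the Jacobian of the $(y',r)\leftrightarrow y$ change of variables stays of the correct order throughout the polar region, which follows from the explicit aperture $r/100n$ in the definition of $C^{\delta,\star}$.
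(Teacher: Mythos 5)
Your proposal follows the same overall strategy as the paper: expand the $L^n$ norm into a sum of $n$-fold intersections, use the polar-cap geometry and slicing to reduce to an affine slab picture, prove a transversal volume bound, and close by a C\'ordoba-style dyadic count. The slab derivation in $(y',r^2)$-coordinates is a clean reformulation of the paper's Lemmas~\ref{lem: pairs} and~\ref{lem: generaltuplebound}, and your bound $\delta^n/|\det(v_{12},\dots,v_{1n})|$ is equivalent to the paper's $\delta^n/(\prod t_j\prod\theta_j)$ via the identity \eqref{detproperty}. So the geometric heart is the same.

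There are, however, two genuine gaps in the counting half. First, the claim that ``the angular constraint $V\sim 2^{-k}$ further reduces the tuple count by a factor $\sim 2^{-k}$'' is asserted but not proved, and it is not an immediate generalisation of the two-dimensional C\'ordoba count. The constraint $|\det(\hat v_{12},\ldots,\hat v_{1n})|\sim 2^{-k}$ is a single scalar condition on an $(n-1)$-tuple of $\delta/\rho_j$-separated directions, and whether its discrete count is $\lesssim 2^{-k}$ times the unconstrained count depends on how the smallness of $\det$ is distributed among the $\hat v_{1j}$. The paper makes this precise by replacing the single parameter $V$ with the iterated angular parameters $\theta_j=|\operatorname{proj}_{\langle e(C_1,C_2),\ldots,e(C_1,C_{j-1})\rangle^\perp}e(C_1,C_j)|$ and proving the count $\lesssim \theta_j^{n-j+1}(t_j/\delta)^{n-1}$ at each step (Lemma~\ref{lem: cardinalitybound}, Corollary~\ref{cor: cardinality}); summing over dyadic $(\theta_j)$ with $\prod\theta_j\sim 2^{-k}$ then recovers your $2^{-k}$, with the single logarithmic loss coming from the $\theta_n$ (exponent zero) sum. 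Without this finer decomposition the claimed reduction factor is not justified.

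Second, you do not address the degenerate configurations where the bound $\delta^n/|\det|$ is useless or undefined. When the centres are collinear or coincide up to $O(\delta)$ errors (so some $\theta_j\lesssim\delta/\rho_j$ or some $\rho_j\lesssim\delta$), the transversal volume estimate blows up and must be replaced by a cruder bound, and these tuples still contribute to the sum. This is not merely bookkeeping: the single-parameter $V$-decomposition cannot see \emph{which} direction is degenerate, whereas the paper's iterated $\theta_j$-decomposition allows it to drop the degenerate index and run an induction on the number of ``non-degenerate'' spheres (this is the role of the sets $\cC^{C_1,\dots,C_{j-1}}_{t_j,\le\delta/t_j}$ in \eqref{eq: angular degenerate} and the induction on $m$ in the final subsection). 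To make your proposal rigorous you would need either to carry out the iterated angular decomposition or to supply a separate argument truncating the $k$-range and controlling the degenerate tail, and the latter essentially forces you back to the former.

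Finally, a minor remark: as stated in the paper the polar region $C^{\delta,\star}(x,r)$ is defined by $|y_n-x_n|<r/100n$, but the geometry you describe (vertical lines transverse to $C_1$, tangencies pushed to the equator) requires $y$ near the poles $y_n\approx\pm r$, so one should read this condition as restricting to the polar caps; your parametrisation and Jacobian remarks are consistent with that intended reading.
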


This reduction is standard within the geometric maximal function literature; see \cite{Carbery1988} or \cite[Proposition 22.4]{Mattila_book} and \cite[Proposition 22.6]{Mattila_book} for an exposition of the argument in the Kakeya case, which easily adapts to the present setting. We remark that here we use an effective variant of the pigeonholing argument in \cite[Proposition 22.6]{Mattila_book}, involving an additional H\"older step, which results in a $(\log \delta^{-1})^{(n-1)/n}$ loss.

\subsection{Spherical intersection bound} For the remainder of the paper we focus on the proof of Proposition~\ref{prop: multiplicity}. Henceforth, let $\cC$ denote a family of spheres with $\delta$-separated centres lying in $Q^{n-1} \times \{0\}$ and with radii lying in $[1,2]$.\medskip

Observe that 
\begin{equation*}
    \big\|\sum_{C \in \cC}  \chi_{C^{\delta, \star}}\|_{L^n(\R^n)}^n =  \sum_{C_1, \dots, C_n \in \cC} \Big| \bigcap_{j=1}^{n} C_j^{\delta, \star} \Big|.
\end{equation*}
The crux of our argument is to understand the possible volume of the intersection between $n$ polar regions $C_1^{\delta, \star}, \dots, C_n^{\delta, \star}$. This volume is determined by the distance and angular separation between the centres of the underlying spheres. To measure these quantities, given any sphere $C \subset \R^n$ we let $x(C) \in \R^n$ denote the centre of $C$. For a pair of spheres $C$, $\bar{C} \subset \R^n$, we define
\begin{equation}\label{eq: sphere data}
\dist(C, \bar{C}) := |x(C) - x(\bar{C})| \qquad \textrm{and} \qquad  \bde(C, \bar{C}) := \dfrac{x(\bar{C})-x(C)}{|x(\bar{C})-x(C)|}, 
\end{equation}
where we only consider $\bde(C, \bar{C})$ in the case where $x(C) \neq x(\bar{C})$. If the centres of $C$, $\bar{C}$ are distinct and lie on $\R^{n-1} \times \{0\}$ (as is the case for $C$, $\bar{C} \in \cC$), then we let $e(C, \bar{C}) \in \R^{n-1}$ be the unit vector satisfying $\bde(C, \bar{C}) = (e(C, \bar{C}), 0)$.

For $\delta \leq t_2 \leq 1$ and $C_1 \in \cC$, we define
\begin{equation}\label{eq: C2 set}
  \cC_{t_2}^{C_1} := \Big\{C_2 \in \cC : \frac{t_2}{2} \leq \dist(C_1, C_2) \leq t_2\Big\}.
\end{equation}
Furthermore, for $3 \leq j \leq n$, $\delta \leq t_j \leq 1$ and $\frac{\delta}{t_j} \leq \theta_j \leq 1$ and a family of spheres $C_1, \hdots, C_{j-1} \in \cC$, we let $\cC_{t_j, \theta_j}^{C_1, \hdots, C_{j-1}}$ denote the set of all $C_j \in \cC$ satisfying\footnote{The (standard) notation used here is explained in \S\ref{subsec: notation}.}
\begin{equation}\label{eq: pgnhole dist angle}
\frac{t_j}{2} \leq \text{dist}(C_1, C_j) \leq t_j \quad \textrm{and} \quad \frac{\theta_j}{2} \leq |\operatorname{proj}_{\langle e(C_1, C_2), \hdots, e(C_1, C_{j-1}) \rangle ^{\perp}} e(C_1, C_j) | \leq \theta_j. 
\end{equation}
We also let $\cC_{t_j, \leq \delta/t_j}^{C_1, \hdots, C_{j-1}}$ denote the set of all $C_j \in \cC$ satisfying
\begin{equation}\label{eq: angular degenerate}
\frac{t_j}{2} \leq \text{dist}(C_1, C_j) \leq t_j \quad \textrm{and} \quad |\operatorname{proj}_{\langle e(C_1, C_2), \hdots, e(C_1, C_{j-1}) \rangle^{\perp}} e(C_1, C_j) | \leq \frac{2\delta}{t_j},
\end{equation}
although we shall only require these sets later in the argument. The key intersection bound is then provided by the following lemma.

\begin{lemma}\label{lem: generaltuplebound}
    Let $2 \leq m \leq n$ and $\delta \leq t_j \leq 1$ for $2 \leq j \leq m$ and $\frac{\delta}{t_j} \leq \theta_j \leq 1$ for $3 \leq j \leq m$. For $C_1 \in \cC$, $C_2 \in \cC_{t_2}^{C_1}$ and $C_j \in \cC_{t_j, \theta_j}^{C_1, \hdots, C_{j-1}}$ for $3 \leq j \leq m$, we have
    \begin{equation*}
        \Big| \bigcap_{j=1}^m C_j^{\delta, \star} \Big| \lesssim \dfrac{\delta^m}{\prod_{j=2}^m t_j \prod_{j=3}^m \theta_j}.
    \end{equation*}
\end{lemma}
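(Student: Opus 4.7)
The plan is to reduce the volume bound to a pointwise lower bound on the $m$-dimensional Jacobian of the map
\[
F(y) := \big(|y - x(C_1)|, \ldots, |y - x(C_m)|\big), \qquad y \in \R^n,
\]
via the coarea formula. With $\bdn_j(y) := (y - x(C_j))/|y - x(C_j)|$, this Jacobian is $J_F(y) = \|\bdn_1(y) \wedge \cdots \wedge \bdn_m(y)\|$, and coarea immediately gives
\[
\int_{\bigcap_j C_j^{\delta,\star}} J_F(y) \, \ud y = \int_{\R^m} \mathcal{H}^{n-m}\big(F^{-1}(\rho) \cap \textstyle\bigcap_j C_j^{\delta,\star}\big) \, \ud \rho \lesssim \delta^m,
\]
since the integrand on the right vanishes unless $\rho_j \in [r_j - \delta, r_j + \delta]$ for every $j$, and on $\bigcap_j C_j^{\delta,\star}$ each restricted level set lies in a transverse intersection of $m$ bounded spheres, hence has $O(1)$ $(n-m)$-dimensional Hausdorff measure. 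Granted this, the lemma will follow once I establish the pointwise lower bound $J_F(y) \gtrsim \prod_{j=2}^m t_j \prod_{j=3}^m \theta_j$ uniformly on $\bigcap_j C_j^{\delta,\star}$.

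For the algebraic step, I would exploit the identity $r_j \bdn_j - r_1 \bdn_1 = x(C_1) - x(C_j) = -d_j \bde(C_1, C_j)$, where $d_j := |x(C_1) - x(C_j)| \in [t_j/2, t_j]$ and $r_j \in [1,2]$ is the radius of $C_j$. Solving for $\bdn_j$ gives $\bdn_j = \tfrac{r_1}{r_j} \bdn_1 - \tfrac{d_j}{r_j} \bde(C_1, C_j)$ for $2 \leq j \leq m$. Substituting into $\bdn_1 \wedge \bdn_2 \wedge \cdots \wedge \bdn_m$ and observing that every term containing a second copy of $\bdn_1$ vanishes, only the contribution selecting $\bde(C_1, C_j)$ at each $j \geq 2$ survives:
\[
\|\bdn_1 \wedge \bdn_2 \wedge \cdots \wedge \bdn_m\| = \prod_{j=2}^m \tfrac{d_j}{r_j} \cdot \|\bdn_1 \wedge \bde(C_1, C_2) \wedge \cdots \wedge \bde(C_1, C_m)\|.
\]
The prefactor immediately supplies the $\gtrsim \prod_{j=2}^m t_j$ part of the bound.

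The remaining wedge is where I would invoke the polar cap. Since every centre lies in $\R^{n-1} \times \{0\}$, each $\bde(C_1, C_j)$ is perpendicular to $\bde_n$. Splitting $\bdn_1 = \bdn_1^{\parallel} + (\bdn_1)_n \bde_n$ and using that the $\bde_n$-component is orthogonal to the span of the $\bde(C_1, C_j)$'s, one obtains
\[
\|\bdn_1 \wedge \bde(C_1, C_2) \wedge \cdots \wedge \bde(C_1, C_m)\| \;\geq\; |(\bdn_1)_n| \cdot \|\bde(C_1, C_2) \wedge \cdots \wedge \bde(C_1, C_m)\|.
\]
The polar cap restriction forces $|(\bdn_1)_n| = |y_n - x_n(C_1)|/r_1 \gtrsim 1$, and the remaining wedge factorises by iterated Gram--Schmidt: stripping off $\bde(C_1, C_j)$ one index at a time produces the projection length $\|\operatorname{proj}_{\langle e(C_1, C_2), \ldots, e(C_1, C_{j-1})\rangle^{\perp}} e(C_1, C_j)\| \geq \theta_j/2$ supplied by \eqref{eq: pgnhole dist angle}, yielding $\gtrsim \prod_{j=3}^m \theta_j$. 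Chaining the three factors closes the desired lower bound on $J_F$.

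The hardest point is justifying $|(\bdn_1)_n| \gtrsim 1$ on the whole intersection: this is precisely the content of the polar cap reduction in \S\ref{subsec: avoid enemy}, and is what forces $\bdn_1$ to have a substantial component transverse to the hyperplane containing all sphere centres, thereby ruling out the tangent configurations of \S\ref{subsec: enemy}. Once this is in hand, the rest is formal linear algebra together with a standard coarea estimate.
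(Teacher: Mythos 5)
Your proposal is correct and gives a genuinely different derivation from the paper's. The paper's proof of Lemma~\ref{lem: generaltuplebound} proceeds by first invoking Lemma~\ref{lem: pairs} to trap $\bigcap_j C_j^{\delta,\star}$ inside an intersection $\bigcap_{j\geq 2}\Pi_j^{\delta}$ of slabs whose normals are the horizontal vectors $\bde(C_1,C_j)$; because all these normals lie in $e_n^\perp$, the slab intersection is a vertical cylinder $R \times \R$ over a parallelepiped $R\subset\R^{n-1}$, whose volume is computed by the base-times-height formula \eqref{detproperty}. The final factor of $\delta$ is then won by a Fubini argument: vertical lines meet $C_1^{\delta,\star}$ transversally and hence in length $O(\delta)$. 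Your argument is in effect dual to this. You trade the slab containment and Fubini step for a single application of the coarea formula applied to $F(y)=(|y-x(C_j)|)_{j=1}^m$, and the parallelepiped computation for the lower bound on the Jacobian $J_F = \|\bdn_1\wedge\cdots\wedge\bdn_m\|$. The linear algebra is identical in substance: your factorisation $\bdn_j = \tfrac{\rho_1}{\rho_j}\bdn_1 - \tfrac{d_j}{\rho_j}\bde(C_1,C_j)$ followed by Gram--Schmidt is the same computation as the paper's \eqref{eq: R vol 1}--\eqref{detproperty}, and the crucial input $|(\bdn_1)_n|\gtrsim 1$ is exactly the same geometric observation the paper phrases as ``vertical lines are transversal to $C_1$ on the polar cap.''

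Two places in your write-up deserve a little more care. First, in the coarea step you assert that $\cH^{n-m}\big(F^{-1}(\rho)\cap\bigcap_j C_j^{\delta,\star}\big) \lesssim 1$ uniformly in $\rho$; this is true but needs a one-line justification along the lines of Lemma~\ref{lem: pairs}: subtracting the sphere equations $|y-x(C_j)|=\rho_j$ pairwise shows $F^{-1}(\rho)$ lies in $S(x(C_1),\rho_1)\cap\Pi_2\cap\cdots\cap\Pi_m$ with the $\Pi_j$ affine hyperplanes whose normals $\bde(C_1,C_j)$ are linearly independent by the $\theta_j$-hypotheses, so $F^{-1}(\rho)$ is contained in an $(n-m)$-sphere of radius $\lesssim 1$. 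Transversality alone does not obviously give a uniform measure bound without this containment. Second, in the identity $r_j\bdn_j - r_1\bdn_1 = x(C_1)-x(C_j)$, the symbols $r_j$ should denote $\rho_j := |y-x(C_j)|$ rather than the radii of the spheres; since $|\rho_j - r_j|<\delta$ the distinction is quantitatively harmless, but as written the identity is only approximate. With these two clarifications the argument is complete and sound.

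Both approaches buy roughly the same thing. The paper's route is entirely elementary (no coarea) and makes the role of the slicing reduction in \S\ref{subsec: avoid enemy} visually transparent via the cylinder $R\times\R$. Your route compresses the slab analysis and the final Fubini into a single coarea application and so is arguably shorter once the Hausdorff measure bound is supplied; the Jacobian formulation also makes it clear exactly which degeneracies ($J_F\to 0$) are being excluded by the polar cap and angular separation hypotheses.
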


\begin{remark} It is crucial to work with the polar regions $C_j^{\delta, \star}$, rather than the entire annuli $C_j^{\delta}$, centred on $\R^{n-1} \times \{0\}$. Indeed, otherwise the enemy scenario described in \S\ref{subsec: enemy} provides a counterexample to the bound in Lemma~\ref{lem: generaltuplebound}.    
\end{remark}

We begin by recalling the following elementary lemma governing intersections between pairs of spheres. 

\begin{lemma}\label{lem: pairs} Suppose $C$, $\bar{C} \subseteq \R^n$ are two spheres with distinct centres and radii lying in $[1,2]$. For $0 < \delta < 1/2$, we have $C^{\delta} \cap \bar{C}^{\delta} \subseteq \Pi^{\delta}(C, \bar{C})$ where $\Pi^{\delta}(C, \bar{C})$ is an $O(\frac{\delta}{\dist(C, \bar{C})})$-neighbourhood of an affine hyperplane $\Pi(C, \bar{C})$ which has normal direction $\bde(C, \bar{C})$.     
\end{lemma}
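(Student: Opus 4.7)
\medskip

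\noindent\textbf{Proof plan.} The lemma is an elementary computation, obtained by subtracting the two defining spherical relations to linearise them. Write $x := x(C)$, $\bar{x} := x(\bar{C})$, let $r, \bar r \in [1,2]$ be the radii, and set $t := |x - \bar{x}| = \dist(C, \bar{C}) > 0$.

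The first step is to convert the distance constraints into quadratic form. If $y \in C^{\delta} \cap \bar{C}^{\delta}$, then $||y-x| - r| < \delta$ and $||y-\bar{x}| - \bar{r}| < \delta$. Multiplying by $|y-x| + r = O(1)$ and using $r, \delta = O(1)$, these upgrade to
\begin{equation*}
    \big||y-x|^2 - r^2\big| \lesssim \delta \qquad \text{and} \qquad \big||y-\bar{x}|^2 - \bar{r}^2\big| \lesssim \delta.
\end{equation*}

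The key step is then to subtract these two constraints. The quadratic term $|y|^2$ cancels from $|y-x|^2 - |y-\bar{x}|^2 = 2 y \cdot (\bar{x} - x) + |x|^2 - |\bar{x}|^2$, leaving a linear condition on $y$:
\begin{equation*}
    \big| 2 y \cdot (\bar{x} - x) + |x|^2 - |\bar{x}|^2 - (r^2 - \bar{r}^2) \big| \lesssim \delta.
\end{equation*}
Dividing through by $2 t$ rewrites this as
\begin{equation*}
    \big| y \cdot \bde(C, \bar{C}) - c \big| \lesssim \frac{\delta}{t}, \qquad \text{where} \quad c := \frac{|\bar{x}|^2 - |x|^2 + r^2 - \bar{r}^2}{2 t}.
\end{equation*}
This exactly says that $y$ lies in the $O(\delta/t)$-neighbourhood of the affine hyperplane $\Pi(C, \bar{C}) := \{y \in \R^n : y \cdot \bde(C, \bar{C}) = c\}$, whose normal direction is $\bde(C, \bar{C})$, as required.

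There is no genuine obstacle to carrying this out; the only minor point is tracking constants when passing from the linear constraint $||y - x| - r| < \delta$ to the quadratic one, which is harmless because $r$ and $|y - x|$ are bounded (the latter because $|y-x| < r + \delta < 3$). The whole argument is a few lines once the cancellation of $|y|^2$ is observed.
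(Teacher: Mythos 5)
Your proof is correct and takes essentially the same approach as the paper: both arguments subtract the two defining quadratic relations so that the $|y|^2$ term cancels, yielding a linear constraint on $y$ with normal $\bde(C,\bar C)$. The only cosmetic difference is bookkeeping: the paper first derives the exact hyperplane $\Pi(C,\bar C)$ for two unthickened spheres and then handles $C^\delta\cap\bar C^\delta$ by perturbing the radii $r,\bar r$ by at most $\delta$, whereas you work directly with the thickened constraints $\bigl||y-x|^2-r^2\bigr|\lesssim\delta$ throughout, which is a touch more streamlined and equally rigorous.
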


This result appears implicitly in the literature (for instance, in \cite[Lemma 2.1]{KW1999}). Nevertheless, for completeness, we present the simple proof.

\begin{proof}[Proof (of Lemma~\ref{lem: pairs})] We first consider the intersection between two spheres $C = C(x, r)$ and $\bar{C} = C(\bar{x}, \bar{r})$ where $r$, $\bar{r} \in [1, 2]$ and $x$, $\bar{x} \in \R^n$ are such that $x \neq \bar{x}$. Suppose there exists a point $y \in C \cap \bar{C}$. Then, by expanding the defining equations of the spheres, 
\begin{align*}
|y|^2- 2\langle y, \; x\rangle + |x|^2 = r^2\qquad \textrm{and} \qquad
|y|^2- 2\langle y, \; \bar{x}\rangle + |\bar{x}|^2 = \bar{r}^2.
\end{align*}
Subtracting the two equations, we get 
\begin{equation*}
    \langle \bde(C, \bar{C}), y \rangle = \frac{r^2 - \bar{r}^2 - (|x|^2-|\bar{x}|^2)}{2\dist(C, \bar{C})}
\end{equation*}
where $\dist(C, \bar{C})$ and $\bde(C, \bar{C})$ are as defined in \eqref{eq: sphere data}. Thus, we see that $C \cap \bar{C}$ lies in an affine hyperplane $\Pi(C, \bar{C})$ with normal equal to $\bde(C, \bar{C})$.\medskip

Now consider the continuum analogue of the above setup, replacing the spheres $C$, $\bar{C}$ with the annuli $C^{\delta}$, $\bar{C}^{\delta}$. If $y \in C^{\delta} \cap \bar{C}^{\delta}$, then there exist $r'$, $\bar{r}\,' \in [1/2, 4]$ satisfying $|r - r'|<\delta$ and $|\bar{r} - \bar{r}\,'|<\delta$ such that $y \in C(x, r') \cap C(\bar{x}, \bar{r}\,')$. We now apply the preceding observations to the pair of perturbed spheres $C(x, r')$, $C(\bar{x}, \bar{r}\,')$. From this, it follows that $C^{\delta} \cap \bar{C}^{\delta} \subseteq \Pi^{\delta}(C, \bar{C})$ where $\Pi^{\delta}(C, \bar{C})$ is an $O(\frac{\delta}{\dist(C, \bar{C})})$-neighbourhood of $\Pi(C, \bar{C})$.
\end{proof}

\begin{proof}[Proof (of Lemma~\ref{lem: generaltuplebound})]
    By Lemma~\ref{lem: pairs} and the constraints on the centres and radii of the spheres, we have 
    \begin{equation} \label{lem: tripleslemmagen1}
    C_1^{\delta, \star} \cap C_2^{\delta, \star} \cap \hdots \cap C_m^{\delta, \star} \subseteq \Pi_2^{\delta} \cap \Pi_3^{\delta} \cap \hdots \cap \Pi_m^{\delta} \cap [-10, 10]^n
    \end{equation}
    where each $\Pi^\delta_j$ is an $O(\frac{\delta}{t_j})$-neighbourhood of an affine hyperplane with normal $\bde(C_1, C_j)$. Since the spheres are all centred on the horizontal hyperplane $\R^{n-1} \times \{0\}$, each normal vector $\bde(C_1, C_j)$ lies on the horizontal hyperplane. We may therefore define $R$ to be the parallelepiped in $\R^{n-1}$ satisfying  
    $$
    \Pi^\delta_2 \cap \Pi^\delta_3 \cap \hdots \cap \Pi_m^{\delta} \cap \big([-10, 10]^{n-1} \times \{a\}\big) = R \times \{a\} \quad \textrm{for all $a \in \R$.} 
    $$
    In particular, the orthogonal projection of $\bigcap_{j=2}^m \Pi^\delta_j$ onto $\R^{n-1} \times \{0\}$ gives 
    \begin{equation} \label{lem: tripleslemmagen2}
        \operatorname{proj}_{e_n^\perp}\Big(\bigcap_{j=2}^m \Pi_j^{\delta} \cap [-10, 10]^n\Big) = R \times \{0\}.
    \end{equation}

   For each $2 \leq j \leq m$, we can write $\Pi^\delta_j = \widetilde{\Pi}^\delta_j \times \{a\}$ for all $a \in \R$ where $\widetilde{\Pi}^\delta_j$ is a neighbourhood of an affine hyperplane in $\R^{n-1}$ with thickness $O(\frac{\delta}{t_j})$ and normal $e(C_1, C_j)$ where $\bde(C_1, C_j) = (e(C_1, C_j), 0)$, as defined in the text following \eqref{eq: sphere data}. With this definition, 
   \begin{equation*}
        R = \bigcap_{j=2}^m \widetilde{\Pi}^\delta_j \cap [-10, 10]^{n-1},
   \end{equation*}
   and by applying a simple change of variables of integration
    \begin{equation}\label{eq: R vol 1}
      |R| \sim |e(C_1, C_2) \wedge \hdots \wedge e(C_1, C_m)|^{-1} \prod_{j=2}^m \frac{\delta}{t_j}.
    \end{equation}
    
    For $1 \leq \ell < d$ we now recall the determinant identity
    \begin{equation} \label{detproperty}
    |\bdx_1 \wedge  \hdots \wedge \bdx_{\ell}| = |\bdx_1|\prod_{j=2}^{\ell} \left| \operatorname{proj}_{\langle \bdx_1, \hdots, \bdx_{j-1} \rangle^{\perp}} \bdx_j \right|,
    \end{equation}
    for $\bdx_j \in \R^d$, $1 \leq j \leq \ell$, which generalises the familiar `base $\times$ height' formula for the area of a parallelogram to higher dimensions. Using \eqref{detproperty} and the hypothesis $C_j \in \cC_{t_j, \theta_j}^{C_1, \hdots, C_{j-1}}$ for $2 \leq j \leq m$, we may write
    \begin{equation}\label{eq: R vol 2}
        |e(C_1, C_2) \wedge \hdots \wedge e(C_1, C_m)|^{-1} \prod_{j=2}^m \frac{\delta}{t_j} \sim \Big(\prod_{j=3}^m \theta_j \Big)^{-1} \prod_{j=2}^m \frac{\delta}{t_j}.
    \end{equation}
    Therefore, combining \eqref{eq: R vol 1} and \eqref{eq: R vol 2}, we have
    \begin{equation} \label{lem: tripleslemmagen3}
        |R| \sim \dfrac{\delta^{m-1}}{\prod_{j=2}^m t_j \prod_{j=3}^m \theta_j}.
    \end{equation}

   Suppose $\ell \subset \R^n$ is a line parallel to the coordinate vector $e_n$ which intersects the polar region $C_1^{\delta, \star}$ at some point $y \in \R^n$. By the definition of the polar caps, the normal to $C_1$ at $y$ makes a small angle with the direction $e_n$ of the line $\ell$ and, crucially, $\ell$ is not tangent to $C_1$ at $y$. Therefore, $\ell$ is transversal to $C_1$ at $y$ and
    \begin{equation*}
        \cH^1\big(\ell \cap C_1^{\delta, \star}\big) \lesssim \delta.
    \end{equation*}
    Now letting $\ell(u)$ denote the line parallel to $e_n$ which passes through $\R^{n-1} \times \{0\}$ at $(u,0)$, using \eqref{lem: tripleslemmagen1} and \eqref{lem: tripleslemmagen2} with Fubini's theorem gives 
    \begin{align*}
        |C_1^{\delta, \star} \cap C_2^{\delta, \star} \cap \hdots \cap C_m^{\delta, \star}| &\leq |C_1^{\delta, \star} \cap\Pi^\delta_2 \cap \Pi^\delta_3 \cap \hdots \cap \Pi^\delta_m| \\
        &\leq \int\limits_{R \times \{0\}} \cH^1\big(\ell(u) \cap C_1^{\delta, \star}\big) \, \ud u \\
        &\lesssim \delta|R|.
    \end{align*}
     Finally, by \eqref{lem: tripleslemmagen3}, we have
    \begin{align*}
        |C^{\delta, \star}_1 \cap C^{\delta, \star}_2 \cap \hdots \cap C^{\delta, \star}_m| \lesssim \dfrac{\delta^m}{\prod_{j=2}^m t_j \prod_{j=3}^m \theta_j},
    \end{align*}
    which is our desired bound.
\end{proof}




\subsection{Bounding the cardinalities of the sphere families}
Since the spheres in $\cC$ have $\delta$-separated centres lying in $\R^{n-1} \times \{0\}$, it immediately follows that the set $\cC_{t_2}^{C_1}$ introduced in \eqref{eq: C2 set} satisfies
\begin{equation*}
    \# \cC_{t_2}^{C_1} \lesssim (t_2 / \delta)^{n-1} \qquad \textrm{for all $C_1 \in \cC$ and $t_2 \geq \delta$.}
\end{equation*}
We next consider the cardinality of the sets $\cC^{C_1, \hdots, C_{j-1}}_{t_j, \theta_j}$ for $3 \leq j \leq n$. For these families, the desired bound is a direct consequence of the following volume estimate. 

\begin{lemma} \label{lem: cardinalitybound}
    Let $E \subseteq \R^d$ be an $m$ dimensional vector subspace for $1 \leq m \leq d-1$. For all $t > 0$ and $0<\theta < \frac{1}{2}$, we have
    \begin{equation*}
        \left|\left\{x \in \R^d : \frac{t}{2} \leq |x| \leq t, \; \frac{\theta}{2} \leq \Big|\operatorname{proj}_E \frac{x}{|x|}\Big| \leq \theta \right\}\right| \lesssim \theta^m t^d.
    \end{equation*}
\end{lemma}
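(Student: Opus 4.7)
The plan is to decompose $\R^d$ orthogonally as $E \oplus E^{\perp}$ and write any $x$ in the set as $x = y + z$ with $y = \operatorname{proj}_E x \in E$ and $z = \operatorname{proj}_{E^{\perp}} x \in E^{\perp}$. The strategy is to bound the set by a product region in these coordinates and then apply Fubini.

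First I would translate the two defining conditions into size constraints on $y$ and $z$ separately. The angular condition $\theta/2 \leq |\operatorname{proj}_E(x/|x|)| \leq \theta$ rewrites as $\tfrac{\theta}{2}|x| \leq |y| \leq \theta |x|$, and combined with $|x| \sim t$ this gives $|y| \sim \theta t$. Using the Pythagorean identity $|x|^2 = |y|^2 + |z|^2$ together with the assumption $\theta < 1/2$ (so that $|y|^2 \leq \theta^2 |x|^2 \leq |x|^2/4$), we deduce $|z|^2 = |x|^2 - |y|^2 \in [t^2/4 - \theta^2 t^2,\, t^2]$, hence $|z| \sim t$.

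Thus the set is contained in the product
\begin{equation*}
\{ y \in E : |y| \leq \theta t\} \times \{ z \in E^{\perp} : |z| \leq t\},
\end{equation*}
whose volume, computed by Fubini, is $\sim (\theta t)^m \cdot t^{d-m} = \theta^m t^d$. This gives the claimed bound.

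I do not anticipate a real obstacle here: the only delicate point is ensuring $|z| \sim t$, which requires the hypothesis $\theta < 1/2$ to guarantee that the angular constraint does not force $|y|$ to dominate $|x|$. Everything else is a routine orthogonal decomposition plus Fubini.
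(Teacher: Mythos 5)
Your proof is correct, and it takes a mildly different route from the paper. The paper only sketches the argument, saying the lemma ``easily follows by representing the set in polar coordinates''; you instead split $\R^d = E \oplus E^\perp$ in Cartesian fashion and bound the set by a product of a ball of radius $\theta t$ in $E$ with a ball of radius $t$ in $E^\perp$, then invoke Fubini. Both are elementary one-line volume estimates; the Cartesian decomposition has the mild advantage of not requiring a measure computation on the sphere.

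One small remark: your worry about the hypothesis $\theta < 1/2$ is misplaced. For the containment in the product region you only need the upper bounds $|y| \leq \theta |x| \leq \theta t$ and the trivial $|z| \leq |x| \leq t$; the lower bound $|z| \gtrsim t$ (which is what $\theta < 1/2$ secures) is never used. The restriction $\theta < 1/2$ in the statement just keeps the bound from being vacuous, since for $\theta \gtrsim 1$ the right-hand side already dominates the volume of the full annulus $\{t/2 \leq |x| \leq t\}$.
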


Lemma~\ref{lem: cardinalitybound} easily follows by representing the set in polar coordinates. Using the hypothesis that the spheres in $\cC$ have $\delta$-separated centres, we are led to the following count. 

\begin{corollary}\label{cor: cardinality} For $3 \leq j \leq n$ and $C_1 \in \cC$, $C_2 \in \cC_{t_2}^{C_1}$ and $C_i \in \cC_{t_i, \theta_i}^{C_1, \hdots, C_{i-1}}$ for $3 \leq i \leq j - 1$, we have
\begin{equation*}
    \# \cC_{t_j, \theta_j}^{C_1, \hdots, C_{j-1}}  \lesssim \dfrac{\theta_j^{n-j+1}t_j^{n-1}}{\delta^{n-1}}.
\end{equation*}
\end{corollary}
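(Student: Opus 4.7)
The plan is to reduce the count to the volume estimate of Lemma~\ref{lem: cardinalitybound} and then invoke the $\delta$-separation of the centres of spheres in $\cC$. The key preliminary observation is that the nested angular conditions defining the sets $\cC_{t_i, \theta_i}^{C_1, \ldots, C_{i-1}}$ force the directions $e(C_1, C_2), \ldots, e(C_1, C_{j-1}) \in \R^{n-1}$ to be linearly independent. Indeed, for each $3 \leq i \leq j-1$, the hypothesis $C_i \in \cC_{t_i, \theta_i}^{C_1, \ldots, C_{i-1}}$ includes the lower bound
$$
\left|\operatorname{proj}_{\langle e(C_1, C_2), \ldots, e(C_1, C_{i-1})\rangle^{\perp}} e(C_1, C_i)\right| \geq \theta_i/2 > 0,
$$
so $e(C_1, C_i)$ cannot lie in the span of its predecessors. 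Inductively, the orthogonal complement
$$
E := \langle e(C_1, C_2), \ldots, e(C_1, C_{j-1}) \rangle^{\perp} \subseteq \R^{n-1}
$$
has dimension exactly $n - j + 1$.

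Next, I would parameterise the spheres $C_j \in \cC_{t_j, \theta_j}^{C_1, \ldots, C_{j-1}}$ by the translated centres $x := x(C_j) - x(C_1) \in \R^{n-1}$. The defining conditions in \eqref{eq: pgnhole dist angle} then read
$$
\frac{t_j}{2} \leq |x| \leq t_j \qquad \text{and} \qquad \frac{\theta_j}{2} \leq \left|\operatorname{proj}_{E} \frac{x}{|x|}\right| \leq \theta_j,
$$
which matches exactly the hypothesis of Lemma~\ref{lem: cardinalitybound} with $d = n-1$ and $m = n - j + 1$. Since $3 \leq j \leq n$, the pair $(d, m)$ lies in the admissible range $1 \leq m \leq d - 1$, so the lemma produces the volume bound $\lesssim \theta_j^{n-j+1} t_j^{n-1}$ for the admissible region. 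To convert this volume into a cardinality, I would use that the centres $\{x(C) : C \in \cC\}$ are $\delta$-separated in $\R^{n-1} \times \{0\}$: the open balls of radius $\delta/2$ around the admissible centres are pairwise disjoint and lie in the $O(\delta)$-thickening of the admissible region, whose volume remains $\lesssim \theta_j^{n-j+1} t_j^{n-1}$. Dividing by the volume $\sim \delta^{n-1}$ of each such ball yields the claimed bound.

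No substantial obstacle is expected: the argument is essentially a dimension count followed by a direct application of Lemma~\ref{lem: cardinalitybound}. The only delicate point is verifying that the nested lower bounds in \eqref{eq: pgnhole dist angle} really do give $E$ its maximal dimension $n - j + 1$, as this is precisely what produces the correct power $\theta_j^{n-j+1}$ in the final estimate.
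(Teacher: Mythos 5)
Your proposal is correct and follows essentially the same route as the paper: establish linear independence of $e(C_1,C_2),\dots,e(C_1,C_{j-1})$ from the nested angular lower bounds, identify $E$ as an $(n-j+1)$-dimensional subspace of $\R^{n-1}$, apply Lemma~\ref{lem: cardinalitybound} with $d=n-1$, and convert the volume bound into a cardinality bound via the $\delta$-separation of centres. One small point worth flagging: the paper's proof contains an arithmetic slip, writing ``$(n-1)-(j-2)=n-j-1$'' for $\dim E$, whereas the correct value $n-j+1$ (which you state) is what is needed and what the final exponent of $\theta_j$ actually reflects.
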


\begin{proof} Recall that each $e(C_1, C_i)$ is a unit vector in $\R^{n-1}$. The hypotheses $C_2 \in \cC_{t_2}^{C_1}$ and $C_i \in \cC_{t_i, \theta_i}^{C_1, \hdots, C_{i-1}}$ for $3 \leq i \leq j-1$ ensure that $e(C_1, C_2), \dots, e(C_1, C_{j-1})$ are linearly independent. Thus, $E := \langle e(C_1, C_2), \hdots, e(C_1, C_{j-1}) \rangle^{\perp}$ is a vector subspace of $\R^{n-1}$ of dimension $(n-1) - (j-2) = n-j-1$. The result now follows by recalling the definition of $\cC_{t_j, \theta_j}^{C_1, \hdots, C_{j-1}}$ from \eqref{eq: pgnhole dist angle}, applying the volume bound from Lemma~\ref{lem: cardinalitybound} and using the hypothesis that the spheres in $\cC$ have $\delta$-separated centres.    
\end{proof}




\subsection{The main estimate} Combining the volume bound from Lemma~\ref{lem: generaltuplebound} with the cardinality bound from Corollary~\ref{cor: cardinality}, we arrive at the following estimate. 

\begin{proposition} \label{prop: angredhypothesis}
    Let $2 \leq m \leq n$ and $\delta \leq t_j \leq 1$ for $2 \leq j \leq m$ and $\frac{\delta}{t_j} \leq \theta_j \leq 1$ for $3 \leq j \leq m$. Then
\begin{equation}\label{eq: angredhypothesis}
 \sum_{C_1 \in \cC} \sum_{C_2 \in \cC^{C_1}_{t_2}} \sum_{C_3 \in \cC^{C_1, C_2}_{t_3, \theta_3}} \hdots \sum_{C_m \in \cC^{C_1, \hdots, C_{m-1}}_{t_m, \theta_m}} \big| \bigcap_{j=1}^{m} C_j^{\delta, \star} \big| \lesssim \delta^{a(m,n)} \prod_{j=3}^m \theta_j^{n-j} \prod_{j=2}^m t_j^{n-2} [\# \cC],   
\end{equation}
where $a(m, n) := m - (m-1)(n-1)$.
\end{proposition}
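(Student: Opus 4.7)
The plan is to combine the volume bound from Lemma~\ref{lem: generaltuplebound} with the two cardinality estimates (the elementary $\delta$-separation count for $\cC_{t_2}^{C_1}$ and Corollary~\ref{cor: cardinality} for the families $\cC_{t_j,\theta_j}^{C_1,\ldots,C_{j-1}}$). The essential observation is that both the volume bound and each cardinality bound are \emph{uniform} in the previously fixed spheres, so the nested sum in \eqref{eq: angredhypothesis} collapses immediately into a product.

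First I would apply Lemma~\ref{lem: generaltuplebound} inside the multiple sum. For any admissible tuple $(C_1, C_2, \ldots, C_m)$ satisfying the constraints of \eqref{eq: angredhypothesis}, Lemma~\ref{lem: generaltuplebound} gives the uniform estimate
\[
\Big|\bigcap_{j=1}^{m} C_j^{\delta,\star}\Big| \lesssim \frac{\delta^{m}}{\prod_{j=2}^m t_j \prod_{j=3}^m \theta_j},
\]
which depends only on the parameters $t_j, \theta_j$ and not on the particular spheres chosen. Pulling this factor out of every sum reduces the problem to counting admissible tuples, i.e.\ to bounding $[\#\cC] \cdot \sup_{C_1}\#\cC_{t_2}^{C_1} \cdot \prod_{j=3}^m \sup_{C_1,\ldots,C_{j-1}} \#\cC_{t_j, \theta_j}^{C_1,\ldots,C_{j-1}}$. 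The crucial point is that Corollary~\ref{cor: cardinality} bounds $\#\cC_{t_j,\theta_j}^{C_1,\ldots,C_{j-1}}$ by a quantity that is independent of the choice of $C_1,\ldots,C_{j-1}$, so the suprema may be replaced by the stated bounds and the sums separate.

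Next I would substitute the cardinality bounds
\[
\#\cC_{t_2}^{C_1} \lesssim (t_2/\delta)^{n-1}, \qquad \#\cC_{t_j, \theta_j}^{C_1,\ldots,C_{j-1}} \lesssim \frac{\theta_j^{n-j+1}\, t_j^{n-1}}{\delta^{n-1}} \quad (3 \leq j \leq m),
\]
multiply everything together, and tidy up exponents. There are $m-1$ cardinality factors, each contributing $\delta^{-(n-1)}$, which combine with the $\delta^m$ from the volume bound to give $\delta^{m-(m-1)(n-1)} = \delta^{a(m,n)}$. Each $t_j$ with $2 \leq j \leq m$ appears once in the numerator of a cardinality bound to the power $n-1$ and once in the denominator of the volume bound, yielding $t_j^{n-2}$. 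Similarly, each $\theta_j$ with $3 \leq j \leq m$ appears as $\theta_j^{n-j+1}$ upstairs and once downstairs, yielding $\theta_j^{n-j}$. The remaining factor is $[\#\cC]$, producing the right-hand side of \eqref{eq: angredhypothesis} exactly.

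There is no genuine obstacle here: once Lemma~\ref{lem: generaltuplebound} and Corollary~\ref{cor: cardinality} are in place, Proposition~\ref{prop: angredhypothesis} is a bookkeeping exercise. The only step that requires care is checking the arithmetic of the exponents, in particular verifying that the $\delta$-power lines up with the definition $a(m,n) = m - (m-1)(n-1)$ and that the $\theta_j$-power drops from $n-j+1$ to $n-j$ after dividing by $\theta_j$.
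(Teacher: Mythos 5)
Your proposal is correct and matches the approach the paper intends: the paper states Proposition~\ref{prop: angredhypothesis} as a direct combination of Lemma~\ref{lem: generaltuplebound} and Corollary~\ref{cor: cardinality} without writing out the bookkeeping, and your proof supplies exactly that computation. The exponent arithmetic checks out: the $m-1$ cardinality factors contribute $\delta^{-(m-1)(n-1)}$ against the $\delta^m$ from the volume bound to give $\delta^{a(m,n)}$, and the $t_j$ and $\theta_j$ powers drop by one upon division as you say, so the argument is complete.
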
 

Recall that our goal is to prove the multiplicity bound in Proposition~\ref{prop: multiplicity}, which can be restated as
\begin{equation}\label{eq: goal}
    \sum_{C_1, \dots, C_n \in \cC} \Big| \bigcap_{j=1}^{n} C_j^{\delta, \star} \Big| \lesssim(\log \delta^{-1}) \delta^{a(n,n)} [\# \cC],
\end{equation}
where the exponent $a(n,n) = n - (n-1)^2$ is as defined in Proposition~\ref{prop: angredhypothesis} above. If we fix $m = n$ in Proposition~\ref{prop: angredhypothesis} and sum the inequality \eqref{eq: angredhypothesis} over all dyadic choices of $\delta \leq t_j \leq 1$ for $2 \leq j \leq n$ and $\frac{\delta}{t_j} \leq \theta_j \leq 1$ for $3 \leq j \leq n$, then we come very close to completing the proof of Proposition~\ref{prop: multiplicity}.\footnote{Note, when we carry out this summation, we obtain a logarithmic loss either from summing in the $t_2$ parameter in the $n=2$ case or summing in the $\theta_n$ parameter in the $n \geq 3$ case.} Indeed, once we have done this, it only remains to account for terms on the left-hand side of \eqref{eq: goal} corresponding to degenerate configurations of spheres $C_1, \dots, C_n \in \cC$ where the centres are essentially affinely dependent (for instance, cases where $C_j \in \cC_{t_j, \leq \delta/t_j}^{C_1, \hdots, C_{j-1}}$ for some $3 \leq j \leq n$, where this set is as defined in \eqref{eq: angular degenerate}).\medskip

The remaining degenerate configurations are in fact easier to handle. For instance, if we consider the extreme case where all the centres essentially agree,
\begin{equation}\label{eq: base case}
     \sum_{\substack{C_1, \dots, C_n \in \cC \\ \dist(C_i, C_j) \lesssim \delta,\, 1 \leq i < j \leq n}} \Big| \bigcap_{j=1}^{n} C_j^{\delta, \star} \Big| \lesssim \delta [\# \cC],
\end{equation}
which is smaller than the right-hand side of \eqref{eq: goal}. Nevertheless, for completeness, in the next subsection we show how Proposition~\ref{prop: angredhypothesis} can be used to treat \textit{all} the terms on the left-hand side of \eqref{eq: goal}, including those corresponding to degenerate configurations, and thereby complete the proof of Proposition~\ref{prop: multiplicity}.




\subsection{Concluding the proof of Proposition~\ref{prop: multiplicity}}

For the dedicated reader, here we conclude the proof of Proposition~\ref{prop: multiplicity} by repeated application of the bound from Proposition~\ref{prop: angredhypothesis}. This final part of the argument is a matter of technical bookkeeping: as mentioned, the heart of the proof is already contained in Proposition~\ref{prop: angredhypothesis}. 

\begin{proof}[Proof (of Proposition~\ref{prop: multiplicity})] Throughout what follows, we fix $0 < \delta < 1/2$. For $1 \leq m \leq n$ we shall prove by induction that
\begin{equation}\label{eq: induction hyp}
     \sum_{C_1, \dots, C_m \in \cC} \Big| \bigcap_{j=1}^m C_j^{\delta, \star} \Big| \lesssim (\log\delta^{-1})\delta^{n - (n-1)^2} [\# \cC],
\end{equation}
holds for all families $\cC$ of spheres with $\delta$-separated centres lying in $Q^{n-1} \times \{0\}$ and radii lying in $[1,2]$. Taking $m = n$ then gives Proposition~\ref{prop: multiplicity}.\medskip

For the base case $m = 1$, we obtain a stronger estimate by the same argument as used in \eqref{eq: base case}. Thus, we assume as an induction hypothesis that \eqref{eq: induction hyp} holds for some $1 \leq m \leq n-1$ and all families of spheres $\cC$ as above. Fixing $\cC$, our goal is now to prove\footnote{We shall obtain a different implicit constant at each stage of the induction, but since there are only $n$ steps the resulting constants are always admissible.}
\begin{equation}\label{eq: ind step}
     \sum_{C_1, \dots, C_{m+1} \in \cC} \Big| \bigcap_{j=1}^{m+1} C_j^{\delta, \star} \Big| \lesssim (\log\delta^{-1}) \delta^{n - (n-1)^2} [\# \cC].
\end{equation}

We may express the left-hand side of \eqref{eq: ind step} as
    \begin{equation}\label{eq: sep decomposition}
     \sum_{\substack{(C_1, \hdots, C_{m+1})\in\cC^{m+1} \\ \dist(C_{i_1}, C_{i_2}) < 2\delta \textrm{ for some }i_1 \neq i_2}} \big| \bigcap_{j=1}^{m+1} C_j^{\delta,\star} \big|+ \sum_{\substack{(C_1, \hdots, C_{m+1})\in \cC^{m+1} \\ \dist(C_{i_1}, C_{i_2}) \geq 2\delta \textrm{ for } i_1 \neq i_2}}\big| \bigcap_{j=1}^{m+1} C_j^{\delta,\star} \big|.
    \end{equation}
    The first term is easily treated using the $\delta$-separation of the centres and our induction hypothesis \eqref{eq: induction hyp}. It remains to bound the second term.\medskip

The above argument accounts for degenerate configurations with coincident centres. It remains to account for degenerate configurations with poor angular separation. Once we have done this, the remaining non-degenerate configurations can then be tackled by an appeal to Proposition~\ref{prop: angredhypothesis}.\medskip

Let $\bdT(\delta)$ denote the set of all tuples $\bdt = (t_j)_{j=2}^{m+1}$ where $t_j$ is a power of $2$ satisfying $\delta \leq t_j \leq 1$ for $2 \leq j \leq m+1$. Given $J \subseteq \{3, \hdots, m+1\}$ and $\bdt \in \bdT(\delta)$, let $\bdTheta_J(\delta; \bdt)$ denote the set of all tuples $\bdtheta = (\theta_j)_{j \in J}$ where $\theta_j$ is a power of $2$ satisfying $\frac{\delta}{t_j} \leq \theta_j \leq 1$ for $j \in J$.\medskip

Fix $J \subseteq \{3, \hdots, m+1\}$, $\bdt \in \bdT(\delta)$ and $\bdtheta \in \bdTheta_J(\delta; \bdt)$. Enumerate $\{1,2\} \cup J = \{\sigma(1), \hdots, \sigma(\ell)\}$ where $\ell = \#J + 2$ and $\sigma(i) < \sigma(i+1)$ for $1 \leq i \leq \ell-1$. Given $2 \leq j \leq m+1$, let $1 \leq i_j \leq \ell$ be the largest value satisfying $\sigma(i_j) \leq j - 1$. Let $\textbf{C}(\cC; \bdt, \bdtheta)$ denote the collection of all tuples $(C_1, \hdots, C_{m+1}) \in \cC^{m+1}$ which satisfy the following:
\begin{enumerate}[i)]
    \item $C_1 \in \cC$ and $C_2 \in \cC^{C_1}_{t_2}$;
    \item If $j \in J$, then $C_j \in \cC^{C_{\sigma(1)}, \hdots, C_{\sigma(i_j)}}_{t_j, \theta_j}$;
    \item If $j \in J^{\mathrm{c}} := \{3, \hdots, m+1\} \setminus J$, then $C_j \in \cC^{C_{\sigma(1)}, \hdots, C_{\sigma(i_j)}}_{t_j, \leq \delta/t_j}$.
\end{enumerate}
Here the set of spheres appearing in iii) is as defined in \eqref{eq: angular degenerate}. Letting $\Sigma_{\mathrm{sep}}$ denote the second term in \eqref{eq: sep decomposition}, it follows that
\begin{equation} \label{angred1}
    \Sigma_{\mathrm{sep}} = \sum_{J \subseteq \{3, \hdots, m+1\}} \sum_{\bdt \in \bdT(\delta)} \sum_{\bdtheta \in \bdTheta_J(\delta; \bdt)} \sum_{(C_1, \hdots, C_{m+1}) \in \textbf{C}(\cC; \bdt, \bdtheta)} \big| \bigcap_{j=1}^{m+1} C_j^{\delta,\star} \big|.
\end{equation}

We now define the reduced family $\textbf{C}_{\text{red}}(\cC; \bdt, \bdtheta)$ to be the collection of all $\ell$-tuples $(C_{\sigma(1)}, \hdots, C_{\sigma(\ell)}) \in \cC^{\ell}$ satisfying
\begin{enumerate}[i)]
    \item $C_1 \in \cC$ and $C_2 \in \cC^{C_1}_{t_2}$;
    \item If $3 \leq i \leq \ell$, then we have $C_{\sigma(i)} \in \cC^{C_{\sigma(1)}, \hdots, C_{\sigma(i-1)}}_{t_{\sigma(i)}, \theta_{\sigma(i)}}$.
\end{enumerate}
If $(C_1, \hdots, C_{m+1})\in \textbf{C}(\cC; \bdt, \bdtheta)$, then $(C_{\sigma(1)}, \hdots, C_{\sigma(\ell)})\in\textbf{C}_{\text{red}}(\cC; \bdt, \bdtheta)$ and
\begin{equation*}
\big| \bigcap_{j=1}^{m+1} C_j^{\delta, \star} \big| \leq \big| \bigcap_{i=1}^{\ell} C_{\sigma(i)}^{\delta, \star} \big|,
\end{equation*}
so that 
\begin{equation} \label{angred2}
\sum_{(C_1, \hdots, C_{m+1}) \in \textbf{C}(\cC; \bdt, \bdtheta)} \big| \bigcap_{j=1}^{m+1} C_j^{\delta, \star} \big| \leq M_J(\delta;\mathbf{t}) \sum_{(C_{\sigma(1)}, \hdots, C_{\sigma(\ell)}) \in \textbf{C}_{\text{red}}(\cC; \bdt, \bdtheta)} \big| \bigcap_{i=1}^{\ell} C_{\sigma(i)}^{\delta, \star} \big|
\end{equation}
where
\begin{equation*}
    M_J(\delta;\mathbf{t}) := \prod_{j \in J^{\mathrm{c}}} \max_{C_{\sigma(1)}, \hdots, C_{\sigma(i_j)} \in \cC} \, \big[\#\cC^{C_{\sigma(1)}, \hdots, C_{\sigma(i_j)}}_{t_j, \leq \delta/t_j}\big].
\end{equation*}
Using Proposition~\ref{prop: angredhypothesis}, we have 
\begin{equation} \label{angred3}
    \sum_{(C_{\sigma(1)}, \hdots, C_{\sigma(\ell)}) \in \textbf{C}_{\text{red}}(\cC; \bdt, \bdtheta)} \big| \bigcap_{i=1}^{\ell} C_{\sigma(i)}^{\delta, \star} \big| \lesssim \delta^{a(\ell,n)} \prod_{j \in J} \theta_j^{n - j} \prod_{j \in \{2\} \cup J} t_j^{n-2} [\#\cC].
\end{equation}
where, recall, $a(\ell, n) := \ell - (\ell-1)(n-1)$. Here the exponent $n - j$ of $\theta_j$ perhaps requires some explanation. Direct application of Proposition~\ref{prop: angredhypothesis} results in the exponent $n - \sigma^{-1}(j)$ for $j \in J$, where here we consider $\sigma$ a bijection from $\{1, \dots, \ell\}$ to $\{1,2\} \cup J$. However, it is clear from the definitions that $\sigma^{-1}(j) \leq j$ and, since $\theta_j \lesssim 1$, we can pass to $n - j$ as above.\medskip

If $C_j \in \cC^{C_{\sigma(1)}, \hdots, C_{\sigma(i_j)}}_{t_j, \leq \delta/t_j}$, then the centre $x(C_j)$ of $C_j$ must lie in the ball $B(x(C_1), t_j)$ in an $O(\delta)$ neighbourhood of an $i_j-1$ dimensional plane spanned by vectors $\bde(C_{\sigma(1)}, C_{\sigma(i)})$ for $2 \leq i \leq i_j$. Therefore,
\begin{equation*}
\# \cC^{C_{\sigma(1)}, \hdots, C_{\sigma(i_j)}}_{t_j, \leq \delta/t_j} \lesssim \Big(\frac{t_j}{\delta}\Big)^{i_j-1}, 
\end{equation*}
so that
\begin{equation*}
    M_J(\delta;\mathbf{t}) \lesssim \Big(\prod_{j \in J^{\mathrm{c}} } t_j^{i_j-1}\Big) \delta^{-\sum_{j\in J^{\mathrm{c}}}(i_j-1)}.
\end{equation*}
Now since $i_j \geq 2$ for all $j\in J^{\mathrm{c}}$ and $t_j \lesssim 1$, we can bound $t_j^{i_j-1} \lesssim t_j$. Since we also have $i_j \leq m$ for all $j\in J^{\mathrm{c}}$ and $\#J^{\mathrm{c}} = (m+1-2)-(\ell-2)= m+1-\ell$, we have
\begin{equation*}
    \sum_{j\in J^{\mathrm{c}}}(i_j-1) \leq (m-1)(m+1 - \ell) \leq (n-2)(n - \ell).
\end{equation*}
Therefore,
\begin{equation} \label{angred4}
    M_J(\delta;\mathbf{t}) \lesssim \delta^{-(n-2)(n - \ell)} \prod_{j\in J^{\mathrm{c}} } t_j. 
\end{equation}
Substituting \eqref{angred3} and \eqref{angred4} into \eqref{angred2} and simplifying the exponents, we obtain
\begin{equation*}
    \sum_{(C_1, \hdots, C_{m+1}) \in \textbf{C}(\cC; \bdt, \bdtheta)} \big| \bigcap_{j=1}^{m+1} C_j^{\delta, \star} \big| \lesssim \delta^{n-(n-1)^2} \Big(\prod_{j \in J} \theta_j^{n - j}\Big) \Big( \prod_{j=2}^{m+1} t_j^{\min\{n-2, 1\}}  \Big) [\#\cC].
\end{equation*}
Applying this termwise to \eqref{angred1} and summing over the dyadic parameters $\bdt \in \bdT(\delta)$ and $\bdtheta \in \bdTheta_J(\delta; \bdt)$ and $J \subseteq \{3, \hdots, n\}$, we conclude that 
\begin{equation*}
    \Sigma_{\mathrm{sep}} \lesssim (\log\delta^{-1})\delta^{n-(n-1)^2}\#\cC.
\end{equation*}
Here the logarithm arises in one of two ways:\smallskip

 \noindent If $n = 2$ and $m = 1$, then there is only a single distance parameter $t_2$ and no angular parameters. In this case, $\min\{n-2, 1\} = 0$ and summing in $t_2$ incurs a logarithm. \smallskip
 
\noindent If $n \geq 3$, then $\min\{n-2, 1\} = 1$ and so we can sum in all the $t_j$ parameters without incurring any loss. Similarly, we can sum in $\theta_j$ for $3 \leq j \leq n-1$ without incurring any loss. Thus, we only lose a logarithm when $n \in J$, from the summation in $\theta_n$. \smallskip

We have obtained favourable bounds for both terms \eqref{eq: sep decomposition} and, consequently, \eqref{eq: ind step} holds. This closes the induction and concludes the proof.   
\end{proof}




\bibliography{Reference}
\bibliographystyle{amsplain}

\end{document}